\newtheorem{theorem}{Theorem}[section]
\newtheorem{lemma}[theorem]{Lemma}
\newtheorem{conjecture}[theorem]{Conjecture}
\newtheorem{observation}[theorem]{Observation}
\theoremstyle{definition}
\title{Partitioning planar graphs without 4-cycles and 6-cycles 
into a forest and a disjoint union of paths}
\author{Pongpat Sittitrai$^{1}$ \hskip 0.2in Kittikorn Nakprasit$^{2}$}
\address{
	$^{1}$\small Department of Mathematics, Faculty of Science, Khon Kaen University, Khon Kaen, 40002, Thailand.\newline
	Email : pongpat.sittitrai@gmail.com\newline
	$^{2}$\small Department of Mathematics, Faculty of Science, Khon Kaen University, Khon Kaen, 40002, Thailand.\newline
	Email : kitnak@hotmail.com}
\begin{document}

\maketitle

\begin{center}{\bf Abstract}\end{center}
\indent\indent

In this paper, we show that every planar graph without $4$-cycles 
and $6$-cycles has a partition of its vertex set into two sets, 
where one set induces a forest, and the other induces a forest 
with maximum degree at most $2$ 
(equivalently, a disjoint union of paths). 

Note that we can partition the vertex set of a forest into two independent sets. 
However a pair of independent sets combined may not induce a forest. 
Thus our result extends the result of Wang and Xu (2013) 
stating that the vertex set of every planar graph without $4$-cycles 
and $6$-cycles can be partitioned into three sets, 
where one induces a graph with maximum degree two, 
and the remaining two are independent sets.

\section{Introduction} 
In this paper, we consider only undirected simple graphs. 
Let $\mathcal{G}_i$ be a family of graphs. 
A graph $G$ with the vertex set $V(G)$ has a 
\emph{$(\mathcal{G}_1, \dots,  \mathcal{G}_k)$-partition} 
$(V_1, \dots, V_k)$ (a vertex partition) 
if $V(G)$ can be partitioned into 
$k$ sets $V_1$, $V_2,\dots, V_k$ where $V_i$ is an empty set or 
the induced subgraph $G[V_i]$ is in $\mathcal{G}_i$ for each $i\in\{1,\dots,k\}$. 


Certain classes of graphs are of interest. 
Let  $\mathcal{F}_d$ be a family of forests with maximum degree $d$, 
and let  $\Delta_d$ be a family of graphs with maximum degree $d$. 
We use $\mathcal{I}$ for $\mathcal{F}_0$ and $\Delta_0$, 
and we use $\mathcal{F}$ for $\mathcal{F}_\infty$ 
(a family of forests with unbounded degree). 

Note that a $(\Delta_{d_1},\ldots,\Delta_{d_k})$-partition is 
equivalent to a $(d_1,d_2,\dots, d_n)$-coloring. 
Accordingly, an improper vertex coloring 
(a generalization of a proper coloring) 
can be regarded as a kind of vertex partition 


 
The following table shows some known results about the existence of 
particular vertex partitions for some classes of planar graphs. 
Some results may be redundant since a forest  
can be partitioned into one or two independent sets 
and $\mathcal{F}_d$ is a subclass of $\Delta_d.$ 
Nonetheless, we still put original results about 
$(\Delta_{d_1}, \Delta_{d_2},\dots,  \Delta_{d_n})$-partition 
in the table for a chronological reason.

\begin{center}
	\begin{tabular}{ |c|c|c| } 
		\hline
		Classes of Planar graphs  & $(\Delta_{d_1}, \dots,  \Delta_{d_n})$-partition & $(\mathcal{F}_{d_1}, \dots,  \mathcal{F}_{d_n})$-partition \\ 
	\hline
	Planar graphs& $(\mathcal{I},\mathcal{I},\mathcal{I},\mathcal{I})$ [FCT] & $(\mathcal{I},\mathcal{F},\mathcal{F})$ \cite{IFF} \\ 
		 & $(\Delta_2,\Delta_2,\Delta_2)$ \cite{222}& $(\mathcal{F}_2,\mathcal{F}_2,\mathcal{F}_2)$ \cite{F2F2F2} \\ 
	\hline
	Planar graphs with girth 4& $(\mathcal{I},\mathcal{I},\mathcal{I})$ \cite{000C3} & $(\mathcal{F}_5,\mathcal{F})$ \cite{F5FG4} \\ 
	\hline
	Planar graphs with girth 5& $(\Delta_3,\Delta_4)$\cite{34G5}  & $(\mathcal{F}_3,\mathcal{F}_3)$ (forbid adjacent $5$-cycles) \cite{F33G5} \\ 
	&  & $(\mathcal{I},\mathcal{F})$ \cite{IFG5} \\ 
	\hline
	Planar graphs with girth 6& $(\Delta_1,\Delta_4)$  \cite{02o04o14G8} & $(\mathcal{F}_1,\mathcal{F}_4)$ \cite{F1F4G6}  \\ 
	& $(\Delta_2,\Delta_2)$ \cite{22G6} & $(\mathcal{F}_2,\mathcal{F}_2)$ \cite{F2F2G6} \\ 
	\hline
	Planar graphs with girth 7& $(\mathcal{I},\Delta_4)$ \cite{02o04o14G8} & $(\mathcal{I},\mathcal{F}_5)$  \cite{IF235}\\ 
	\hline
	Planar graphs with girth 8& $(\mathcal{I},\Delta_2)$ \cite{02o04o14G8} & $(\mathcal{I},\mathcal{F}_3)$ \cite{IF235} \\ 
	\hline
	Planar graphs with girth 10& $(\mathcal{I},\mathcal{I},\mathcal{I})$ \cite{IF235} & $(\mathcal{I},\mathcal{F}_2)$ \cite{IF235}  \\ 
	\hline
	\end{tabular}
\end{center}

In \cite{giveva3}, Chartrand and Kronk gave an example of 
a planar graph without an $(\mathcal{F},\mathcal{F})$-partition. 
In view of this, finding the sufficient conditions for planar graphs 
to have an $(\mathcal{F},\mathcal{F})$-partition has become 
an interesting topic ever since. 

Every planar graphs with girth $4$ has an $(\mathcal{F},\mathcal{F})$-partition 
by being  $2$-degenerate. 
On the other hand, Montassier and  Ochem \cite{NomnC3} 
showed that for each $d_1$ and $d_2$, there exists 
a planar graph with girth $4$ having no 
$(\Delta_{d_1},\Delta_{d_2})$-partitions. 
Thus the result of  partitioning the vertex set of  
a planar graph with girth $4$ into two forests cannot be improved 
in terms of the maximum degrees of both forests. 
At best, one may find $d_1$ such that each planar graph 
with girth $4$ has an $(\mathcal{F}_{d_1},\mathcal{F})$-partition. 
Dross et al. \cite{F5FG4} verified this holds for $d_1=5$. 
However the case for $d_1 \leq 4$ is still open. 
In particular, the case $d_1=0$ if true (an $(\mathcal{I},\mathcal{F})$-partition) 
would imply the result by Gr\"otzsch \cite{000C3} 
(a $(\mathcal{I},\mathcal{I},\mathcal{I})$-partition). 

A planar graph without $4$- and $6$-cycles is shown to have 
an $(\mathcal{I},\mathcal{I},\mathcal{I})$-partition 
if it has no $8$-cycles by Wang and Chen \cite{000C4C6C8}  
or it has no $9$-cycles by Kang et al. \cite{000C4C6C9}. 
Liu and Yu \cite{1FC4C6C8C9} improved both results by showing 
that such graphs have an  $(\mathcal{I},\mathcal{F})$-partition. 

Sittitrai and Nakprasit \cite{35o44C4C5} proved that 
every planar graph without $4$-cycles and $5$-cycle has 
a $(\Delta_4,\Delta_4)$-partition and a $(\Delta_3,\Delta_5)$-partition. 
Later Cho et al. \cite{F34C45}  extended the result by proving that 
each such graph has an $(\mathcal{F}_3,\mathcal{F}_4)$-partition.
 
Wang and Xu \cite{200C4C6} showed that every planar graphs without 
$4$-cycles and $6$-cycles has a $(\Delta_2,\mathcal{I},\mathcal{I})$-partition. 
In this work, we improve their result in the following theorem.  
 
 
 \begin{theorem}\label{main}
 	Every planar graph without $4$-cycles and $6$-cycles has 
 	an ($\mathcal{F}_2,\mathcal{F}$)-partition.
 \end{theorem}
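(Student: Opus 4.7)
The plan is to argue by contradiction via a minimal counterexample combined with a discharging argument on the plane embedding, which is the standard machinery for results of this kind. Let $G$ be a planar graph without $4$-cycles and $6$-cycles that has no $(\mathcal{F}_2,\mathcal{F})$-partition and is minimal with respect to $|V(G)|+|E(G)|$. I would first derive a list of forbidden local configurations (reducibility lemmas) and then use discharging to show that no graph can simultaneously avoid all of them while satisfying Euler's formula.

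For the reducibility step, the basic idea is: given a small substructure $H$ in $G$, delete (or contract) $H$, apply minimality to obtain an $(\mathcal{F}_2,\mathcal{F})$-partition $(V_1,V_2)$ of $G-H$, and then show the partition can always be extended to $H$ without violating the maximum-degree-$2$ bound on $V_1$ or the acyclicity of $G[V_1]$ and $G[V_2]$. Natural targets to rule out include: vertices of degree at most $2$; $3$-vertices whose neighborhood is sufficiently light; several mutually adjacent $3$-vertices; and $3$- or $5$-faces whose boundary is entirely composed of small-degree vertices. The delicate point, distinguishing this from the weaker $(\Delta_2,\mathcal{I},\mathcal{I})$ result of Wang and Xu, is that extending to $V_2$ requires avoiding \emph{cycles}, not just monochromatic edges, so I would track which $V_2$-components a vertex's neighbors lie in and, when a naive placement would close a cycle, perform a local swap along a carefully chosen walk to reroute colors without breaking the $\mathcal{F}_2$ structure on $V_1$.

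For the discharging step, assign each vertex $v$ the initial charge $d(v)-4$ and each face $f$ the initial charge $\ell(f)-4$; by Euler's formula these sum to $-8$. Since $G$ has no $4$- or $6$-cycles, face lengths lie in $\{3,5,7,8,9,\ldots\}$, so the only objects carrying negative initial charge are $3$-faces and vertices of degree at most $3$, while $5$-faces already contribute $+1$, $7$-faces $+3$, and longer faces even more. The discharging rules will transfer charge from long faces and from vertices of degree at least $5$ toward $3$-faces and toward low-degree vertices. Using the $4$- and $6$-cycle bans to control how $3$- and $5$-faces can cluster around a given long face or high-degree vertex, and combining this with the forbidden configurations produced in the reducibility step, I would argue that every vertex and face finishes with non-negative charge, contradicting the total of $-8$.

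The main obstacle I anticipate is the reducibility analysis for the forest condition on $V_2$. Unlike an independent-set class, forbidding a cycle in $V_2$ is a global constraint, so extending a partition after deleting a small substructure may force reassignments along a path through previously colored vertices, and every such reassignment risks pushing some vertex past the degree cap of the $\mathcal{F}_2$ class. Calibrating a set of reducible configurations strong enough to drive the discharging argument, yet all genuinely reducible under this combined degree-and-acyclicity constraint, is the delicate bookkeeping that will likely dominate the technical work.
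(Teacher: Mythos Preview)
Your high-level plan---minimal counterexample, reducibility lemmas, then discharging---is exactly the scheme the paper follows, so the skeleton is right. Two substantive differences are worth flagging.

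First, the charging. The paper uses $\mu(v)=2d(v)-6$ and $\mu(f)=d(f)-6$ (total $-12$), not $d(v)-4$ and $\ell(f)-4$. With the paper's choice, $3$-vertices start at charge $0$ while $5$-faces start at $-1$; with yours, $3$-vertices start at $-1$ and $5$-faces at $+1$. The paper's balance is convenient because, after ruling out $2^-$-vertices, the only negative objects are $3$-faces and $5$-faces, and the rules can be organized around $5^+$-vertices feeding their incident and \emph{pendent} $3$- and $5$-faces. The key notion driving the rules is that of a \emph{poor $3$-face}: a $3$-face with two ``terrible'' $3$-vertices (each having a pendent $4^-$-neighbor). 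The crucial reducibility lemmas are that a $3$-vertex has at least one $5^+$-neighbor, a $5$-vertex is incident to no poor $3$-face, and a $6$-vertex is incident to at most one. Your proposal does not yet isolate any configuration of comparable strength, and without something at that level the discharging will not close.

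Second, and more important, your anticipated obstacle---that the forest condition on $V_2$ is a global constraint forcing long recoloring walks---does not materialize in the paper's argument, and I think you are overcomplicating it. The paper never reroutes along a path. The trick is this: whenever a vertex $w$ is a $3^-$-vertex in the reduced graph and lies in $V_1$ with two $V_1$-neighbors, it has at most one $V_2$-neighbor, so it can be pushed into $V_2$ as a leaf without creating a cycle. After this normalization, every vertex of small degree that sits in $V_1$ is not $V_1$-saturated. The reducible configurations are then chosen so that each deleted vertex can be placed in whichever class contains at most one of its neighbors; it enters as a leaf, so acyclicity of both $G[V_1]$ and $G[V_2]$ is automatic and the $\mathcal{F}_2$ degree bound is maintained by the non-saturation guarantee. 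All extensions are purely local. So the ``delicate bookkeeping'' you anticipate is replaced by a single preprocessing move, and the real work is identifying the right configurations (the poor-$3$-face lemmas above) to feed the discharging.
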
 
 
On the other hand, in 2022 Kang et al. \cite{100C4C6} showed that 
every planar graph without $4$-cycles and $6$-cycles has 
an $(\mathcal{F}_1,\mathcal{I},\mathcal{I})$-partition 
(equivalently, a $(1,0,0)$-coloring). 
Inspired by two above results, we put forth the following conjecture.    
\begin{conjecture} 	Every planar graph without $4$-cycles and $6$-cycles 
has an $(\mathcal{F}_1,\mathcal{F})$-partition.
\end{conjecture}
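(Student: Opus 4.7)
The plan is to adapt the minimum counterexample plus discharging strategy used for Theorem \ref{main} to the stricter demand that $G[V_1]$ be a linear forest (a matching together with isolated vertices) rather than a forest of maximum degree $2$. Since any forest is $2$-colorable, an $(\mathcal{F}_1,\mathcal{F})$-partition is strictly stronger than the $(\mathcal{F}_1,\mathcal{I},\mathcal{I})$-partition proved by Kang et al.\ \cite{100C4C6}, so the proof must refine their reducibility rather than merely quote it.

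Let $G$ be a counterexample minimizing $|V(G)|+|E(G)|$. I would first prove $\delta(G)\geq 2$ and then a list of reducible configurations involving $2$-, $3$-, and certain $4$-vertices. Each such lemma takes a partition of a smaller graph $G'$ obtained by deletion or identification and re-inserts the removed vertices: placing $v$ into $V_1$ forces at most one of its neighbors to also lie in $V_1$, and placing $v$ into $V_2$ requires that $v$ not close a cycle in $G[V_2]$. Compared with the $(\mathcal{F}_2,\mathcal{F})$ setting, the $V_1$-budget at each vertex drops from two to one, so several configurations reducible in the proof of Theorem \ref{main} no longer reduce; one must forbid longer structures, such as chains of $\leq 3$-vertices, certain $3$-vertices incident to $3$-faces, and $4$-vertices with prescribed low-degree neighborhoods. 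To make the re-insertion tractable I would carry, for each boundary vertex of the modified subgraph, both a match-status bit (matched/unmatched in $V_1$) and enough $V_2$-connectivity information to detect cycle closures under any local permutation of boundary assignments.

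For the discharging phase I would use Euler's formula with initial charge $\mu(v)=d(v)-4$ and $\mu(f)=\ell(f)-4$, giving total charge $-8$. Because $4$- and $6$-cycles are forbidden, two $3$-faces cannot share an edge (else a $4$-cycle would appear), $5$-faces have no chord, and $\geq 7$-faces are locally sparse in small-degree incidences. The rules would push surplus charge from faces of length $\geq 5$ and from $\geq 5$-vertices to cover the deficits at $3$-faces, $2$-vertices, and $3$-vertices. Forbidding the reducible configurations should guarantee that every potential donor object carries enough positive charge and that every deficient object has a nearby donor, producing the contradiction $\sum \mu \geq 0$.

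The main obstacle is the reducibility phase. Dropping from $\mathcal{F}_2$ to $\mathcal{F}_1$ removes precisely the flexibility that most naturally absorbs clusters of small-degree vertices: a $3$-vertex whose three neighbors are already $V_1$-matched elsewhere cannot be placed in $V_1$ without violating the degree-$1$ bound, and placing it in $V_2$ may close a cycle. Handling such configurations demands either a strictly longer forbidden-configuration list (ruling out all such clusters) or an exchange argument that re-routes one neighbor's match along a carefully chosen alternating path in the current $V_1$. This swap must simultaneously preserve acyclicity of $G[V_2]$, which is where the analysis is most delicate; indeed this is precisely the barrier separating the conjecture from Theorem \ref{main} and from the $(1,0,0)$-coloring result, and overcoming it appears to require a genuinely new structural lemma about planar graphs without $4$- and $6$-cycles rather than a routine strengthening of known discharging arguments.
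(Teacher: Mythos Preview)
The statement you were asked to prove is presented in the paper as a \emph{conjecture}; the authors do not prove it and offer no argument beyond noting that it is suggested by Theorem~\ref{main} together with the $(1,0,0)$-coloring result of Kang et al.\ \cite{100C4C6}. There is therefore no proof in the paper against which to compare your proposal.

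Your proposal, for its part, is not a proof either: it is a research plan. You outline a minimum-counterexample-plus-discharging attack, but you explicitly identify the gap yourself---once the $V_1$-budget at each vertex drops from two to one, the reducibility lemmas behind Theorem~\ref{main} fail, and you concede that closing this gap ``appears to require a genuinely new structural lemma \ldots\ rather than a routine strengthening of known discharging arguments.'' No concrete list of reducible configurations is given, no discharging rules are specified, and no verification is carried out. That is an accurate diagnosis of why the problem is open, but it leaves the conjecture exactly where the paper leaves it: unproved.
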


\section{Structures of a minimal counterexample}

Before we proceed to accrue results, 
some notation is required as follows. 
A \emph{$k$-vertex} (respectively, \emph{$k^+$-vertex} 
and  \emph{$k^-$-vertex})  is a vertex of degree $k$ 
(respectively, at least $k$ and at most $k.$) 
The same notation is applied for faces. 
A $k$-vertex $u$ is a \emph{$k$-neighbor} of $v$ if 
$u$ is adjacent to $v.$ 
The boundary walk of a face $f$ is denoted by $b(f).$ 
A vertex $v$ and a face $f$ are incident if $v$ is on $b(f).$ 
If a vertex $v$ not on $b(f)$ but is adjacent to a $3$-vertex 
$u$ on $b(f),$  then we call $f$ a \emph{pendent face} 
of a vertex $v$ and  $v$ is a \emph{pendent neighbor} 
of $u$ (with respect to $f$).   
We use $n_i(v)$ to denote the number of incident $i$-faces 
of a vertex $v$, 
and use $m_i(v)$ to denote the number 
of pendent  $i$-faces of a vertex $v$.

Given a $3$-vertex $u$ incident to a $3$-face or a $5$-face $f$, 
we call $u$ a \emph{terrible $3$-vertex} of $f$ if it has a 
pendent $4^-$-neighbor, otherwise we call $u$ 
a \emph{non-terrible $3$-vertex}. 
A $3$-face $f$ is a \emph{poor $3$-face} 
if $f$ is incident to two terrible $3$-vertices. 

Let $G$ be a minimal counterexample of Theorem~\ref{main}. 
That is $G$ does not has a ($\mathcal{F}_2,\mathcal{F}$)-partition, 
but each proper subgraph $G'$ of $G$ has 
an ($\mathcal{F}_2,\mathcal{F}$)-partition $(V_1,V_2).$  
For a vertex $v\in V_1,$  
a neighbor of $v$ in $V_1$ is a \emph{$V_1$-neighbor}, 
and we say $v$ is \emph{$V_1$-saturated} 
if $v$ has two $V_1$-neighbors. 
Some properties of $G$ are obtained as follows.

\begin{lemma}\label{atleast3} Each vertex in $G$ is a $3^+$-vertex.
\end{lemma}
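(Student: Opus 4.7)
The plan is to argue by contradiction using the minimality of $G$. Suppose some vertex $v$ has $d(v)\le 2$. Then $G'=G-v$ is a proper subgraph of $G$, still planar and free of $4$- and $6$-cycles, so by minimality $G'$ admits an $(\mathcal{F}_2,\mathcal{F})$-partition $(V_1,V_2)$. I would then extend $(V_1,V_2)$ to an $(\mathcal{F}_2,\mathcal{F})$-partition of $G$ by assigning $v$ to a suitable side, contradicting the assumption that $G$ has no such partition.

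Two simple sufficient conditions drive the extension. First, $v$ may be placed in $V_2$ whenever it has at most one neighbor in $V_2$: adding a vertex of $V_2$-degree at most $1$ to the forest $G[V_2]$ keeps it a forest, and there is no maximum-degree constraint on $V_2$ since the second class is $\mathcal{F}$. Second, $v$ may be placed in $V_1$ whenever it has no neighbor in $V_1$: then $v$ is isolated in $G[V_1\cup\{v\}]$, so the graph remains a forest of maximum degree at most $2$.

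With these two observations in hand the case analysis is immediate. If $d(v)\le 1$, place $v$ in whichever side does not contain its unique neighbor, so that $v$ has no neighbor in its chosen side. If $d(v)=2$ with neighbors $u_1,u_2$, split on whether $\{u_1,u_2\}\subseteq V_2$: if so, place $v$ in $V_1$ by the second observation; otherwise at most one of $u_1,u_2$ lies in $V_2$, so place $v$ in $V_2$ by the first observation. In every case the extended pair is a valid $(\mathcal{F}_2,\mathcal{F})$-partition of $G$, contradicting the choice of $G$. I do not anticipate any real obstacle here; this is the routine minimum-degree reducibility step that opens the discharging argument, and the only subtlety is that we deliberately avoid placing $v$ in $V_1$ whenever $v$ has a $V_1$-neighbor, so the delicate maximum-degree-$2$ constraint in $V_1$ is only ever verified in a trivial situation.
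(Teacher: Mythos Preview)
Your proof is correct and follows essentially the same approach as the paper: delete the low-degree vertex, use minimality to partition the remainder, and then reinsert the vertex into $V_2$ if at most one neighbor lies in $V_2$, or into $V_1$ if both neighbors lie in $V_2$. The paper's version is terser (it treats the $2$-vertex case and waves at the $1^-$-vertex case as similar), but the logic is identical.
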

\begin{proof} Let $u$ be a vertex in $G$. 
Suppose to the contrary that $u$ is a $2$-vertex. 
(The case that $u$ is a $1^-$-vertex is similar). 
Then $G-v$ has a ($\mathcal{F}_2,\mathcal{F}$)-partition $(V_1,V_2)$.  
Let $v$ and $w$ are two adjacent vertices of $u.$ 
	
If $v\in V_1$ or $w\in V_1$, 
then $G$ has a ($\mathcal{F}_2,\mathcal{F}$)-partition 
$(V_1,V_2\cup \{u\}),$ a contradiction. 
	
If $v, w\in V_2$, then $G$ has a ($\mathcal{F}_2,\mathcal{F}$)-partition $(V_1\cup \{u\},V_2),$ a contradiction.
\end{proof}

Since $G$ contains neither $4$- nor $6$-cycles, 
we have the following observation. 

\begin{observation}\label{face} Let $f$ be a face in $G$. 
	\begin{enumerate}
		\item [\rm (i)] A face $f$ is not a $4$-face.  
		\item [\rm (ii)] If $f$ is a $3$-face, 
		then $f$ is not adjacent to a $6^-$-face.
	\end{enumerate}
\end{observation}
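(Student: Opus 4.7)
The plan is a direct case analysis based on the two forbidden cycle lengths, turning each prohibited face configuration into a short cycle that $G$ is assumed to avoid.

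For Part (i), the boundary walk of any $4$-face has length $4$; since $G$ is simple with minimum degree at least $3$ by Lemma~\ref{atleast3}, this boundary walk is a genuine $4$-cycle of $G$, contradicting the hypothesis that $G$ contains no $4$-cycles.

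For Part (ii), I would suppose a $3$-face $f = v_1v_2v_3$ is adjacent to a face $f'$ of length $k\le 6$, sharing (without loss of generality) the edge $v_1v_2$, and then split on $k$. If $k=3$, writing $f' = v_1 v_2 u$ with $u\ne v_3$ yields the $4$-cycle $u\,v_1\,v_3\,v_2\,u$. The case $k=4$ is ruled out by Part (i). If $k=5$, writing the boundary of $f'$ as $v_1 v_2 w_1 w_2 w_3$ produces the $6$-cycle $v_3\,v_2\,w_1\,w_2\,w_3\,v_1\,v_3$. If $k=6$, the boundary of $f'$ is already a $6$-cycle. Each case contradicts the assumption that $G$ has no $4$- and no $6$-cycle.

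The main (minor) obstacle is verifying that the exhibited closed walks in the $k=3$ and $k=5$ cases are genuine simple cycles, i.e.\ that the newly introduced vertices $u$ or $w_i$ are distinct from $v_3$. Any such coincidence would either force $f$ and $f'$ to share two edges (and therefore to collapse into a single face), or yield an even shorter $4$-cycle (for instance $v_3 = w_2$ in the $k=5$ case gives the $4$-cycle $v_1 v_2 w_1 v_3$); each sub-case therefore still produces the desired contradiction, so nothing beyond Lemma~\ref{atleast3} and the hypothesis is needed.
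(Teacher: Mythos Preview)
Your argument is correct and is precisely the routine case analysis the paper has in mind; the paper itself does not spell out a proof, stating the observation as an immediate consequence of $G$ having no $4$- or $6$-cycles. Your handling of the degenerate sub-cases (possible coincidences among the $w_i$ and $v_3$) is adequate for the level of rigor required here.
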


\begin{lemma}\label{forbidvertex} 
    Let $v$ be a vertex of $G.$ 
	\begin{enumerate}
		\item [\rm (i)] If $v$ is a $3$-vertex, 
		        then $v$ is adjacent to a $5^+$-vertex. 
		\item [\rm (ii)] If $v$ is a $5$-vertex, 
		        then $v$ is not incident to a poor $3$-face. 
		\item [\rm (iii)] If $v$ is a $6$-vertex, 
		        then $v$ is incident to at most one poor $3$-face. 
	\end{enumerate}	
\end{lemma}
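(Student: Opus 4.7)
The plan is to prove all three items by contradiction via the minimality of $G$: for each part, assume the conclusion fails at a vertex $v$, delete a small set of vertices at or near $v$ to obtain a proper subgraph, invoke minimality to obtain an $(\mathcal{F}_2,\mathcal{F})$-partition $(V_1,V_2)$ of this subgraph, and then show that $(V_1,V_2)$ can always be extended to an $(\mathcal{F}_2,\mathcal{F})$-partition of $G$, contradicting the choice of $G$. The extension arguments exploit the tight local degree constraints imposed by the failure assumption, together with the two facts that $V_1$ must be a linear forest and $V_2$ must be acyclic.

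For (i), suppose $v$ is a $3$-vertex whose three neighbors $v_1,v_2,v_3$ are all $4^-$-vertices (so each is a $3$- or $4$-vertex by Lemma \ref{atleast3}). Delete $v$ and split on how many of the $v_i$ lie in $V_1$. The extreme cases ($0$ or $3$ in $V_1$) are immediate because placing $v$ in the opposite class leaves it isolated there. In the intermediate cases, placing $v$ in $V_1$ fails only if some $v_i$ is already $V_1$-saturated, and placing $v$ in $V_2$ fails only if two $v_i$'s lie in the same component of $G[V_2]$. Using that each $v_i$ has at most $4$ neighbors, a $V_1$-saturated neighbor $v_i$ has at most two $V_2$-neighbors and can be safely moved to $V_2$ before inserting $v$, yielding a valid extension.

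For (ii), suppose a $5$-vertex $v$ is incident to a poor $3$-face $f=vv_1v_2$, where $v_1,v_2$ are terrible $3$-vertices with pendent $4^-$-neighbors $u_1,u_2$. Delete $\{v_1,v_2\}$ and extend. Since each $v_i$ has exactly three neighbors ($v$, $v_{3-i}$, $u_i$), the extension configurations are controlled by the classes of $v,u_1,u_2$. Whenever a direct placement of $v_1$ or $v_2$ is blocked by $V_1$-saturation or a $V_2$-cycle, use that $v$ has only $5$ neighbors to argue that $v$ itself can be swapped to the opposite class first, making enough room for both $v_1,v_2$. Part (iii) is analogous but applied to two poor $3$-faces $f_1=vv_1v_2$ and $f_2=vv_3v_4$ incident to $v$; by Observation \ref{face}(ii) the two $3$-faces cannot share an edge, so $f_1\cap f_2=\{v\}$, and we delete all four terrible $3$-vertices $v_1,\dots,v_4$ at once.

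The main obstacle lies in part (iii): with four vertices to re-insert, one must simultaneously handle $V_1$-saturation at the common vertex $v$ and cycle creation in $V_2$ through the pendent neighbors $u_1,\ldots,u_4$. The key counting step is that, since $\deg(v)=6$, the $V_1$-saturation and $V_2$-cycle obstructions contributed by $f_1$ and $f_2$ cannot both be tight; a single well-chosen swap of $v$'s class, together with independent local adjustments at each pendent neighbor, should suffice to restore a valid extension and deliver the contradiction.
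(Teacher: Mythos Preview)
Your overall strategy matches the paper's: for each part, delete a small set of vertices near $v$, take an $(\mathcal{F}_2,\mathcal{F})$-partition of the resulting subgraph by minimality, and extend. The deleted sets you propose ($\{v\}$, $\{v_1,v_2\}$, $\{v_1,v_2,v_3,v_4\}$) are exactly those the paper uses. Where your plan diverges is in how the extension is organized, and this is where it becomes imprecise.

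The paper's key device, which your plan does not invoke, is a \emph{normalization} step carried out before any case analysis: after deletion, every vertex whose degree in the subgraph has dropped to at most $3$ may be assumed either to lie in $V_2$ or to lie in $V_1$ without being $V_1$-saturated; in part~(iii), since $v$ has degree exactly $2$ in $G'=G-\{v_1,\dots,v_4\}$, one may even assume $v$ has \emph{no} $V_1$-neighbours when $v\in V_1$. The justification is that a $V_1$-saturated $3^-$-vertex has at most one $V_2$-neighbour and can be pushed into $V_2$ as a leaf. After this normalization the extensions in (i)--(iii) go through by a direct case split on the classes of $v$ and the pendent neighbours, with no further swaps of $v$.

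Your plan instead proposes to swap $v$ to the opposite class on the fly whenever an extension is blocked. This is the shaky step: moving $v$ from $V_1$ to $V_2$ can create a $V_2$-cycle if two of its remaining neighbours lie in the same $V_2$-component, and moving it from $V_2$ to $V_1$ can create $V_1$-saturation or a $V_1$-cycle. You have not argued why the swap is available precisely when it is needed; the paper sidesteps this by normalizing $v$ (and the $u_i$) first, which is where the hypotheses $d(v)=5$ and $d(v)=6$ are actually used. A smaller point in~(i): with two neighbours in $V_1$, placing $v$ in $V_1$ can also fail by creating a $V_1$-cycle even when neither neighbour is saturated, so your claim that it ``fails only if some $v_i$ is already $V_1$-saturated'' is not quite right (the fix is easy: put $v$ in $V_2$, where it is a leaf).
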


\begin{proof} 
	
	\noindent (i)  Suppose to the contrary that 
	each neighbor of $v$ is a $4^-$-vertex. 
	Consider ($\mathcal{F}_2,\mathcal{F}$)-partition $(V_1,V_2)$ 
	of $G-v.$ 
	Since each neighbor of $v$ is a $3^-$-vertex in $G-v$, 
	we may assume that each of them is in $V_1$ 
	but is not $V_1$-saturated or is in $V_2.$    
	
	If two or three neighbors of $v$ are in $V_1,$ 
	then $G$ has a ($\mathcal{F}_2,\mathcal{F}$)-partition 
	$(V_1,V_2\cup \{v\}),$ 
	otherwise  $G$ has an ($\mathcal{F}_2,\mathcal{F}$)-partition $(V_1\cup \{v\},V_2)$ since the only neighbor of $v$ in $V_1$ 
	(if exists) is not $V_1$-saturated. 
	We obtain a contradiction for both cases.
	
    (ii)  Suppose to the contrary that $v$ is incident 
    to a poor $3$-face $f$ with $b(f)= vv_1v_2.$ 
    From the definition, $v_1$ and  $v_2$ are terrible 
    $3$-vertices with pendent $4^-$-neighbors, 
    say $v'_1$ and  $v'_2$, respectively. 
	
	Consider $G-\{v_1,v_2\}$ with an  ($\mathcal{F}_2,\mathcal{F}$)-partition $(V_1,V_2).$  
	
	Since $v$, $v'_1$, and $v'_2$ are $3^-$-vertices in $G-\{v_1,v_2\}$, 
	we may assume that each of them is in $V_1$ 
	but is not $V_1$-saturated or is in $V_2.$  
	
	-	Let $v\in V_1$. 
	
	If $v'_1$ or $v'_2$ is in $V_1$, 
	then $G$ has a ($\mathcal{F}_2,\mathcal{F}$)-partition
	$(V_1,V_2\cup\{v_1,v_2\}),$  a contradiction.
	
	If $v'_1$ and $v'_2$ are in $V_2$, 
	then $G$ has a ($\mathcal{F}_2,\mathcal{F}$)-partition 
	$(V_1\cup\{v_1\},V_2\cup\{v_2\})$ 
	since $v$ is not $V_1$-saturated, a contradiction.

	-	Let $v\in V_2$. 
	
	If $v'_1$ and $v'_2$ are in $V_2$, then $G$ has a ($\mathcal{F}_2,\mathcal{F}$)-partition $(V_1\cup\{v_1,v_2\},V_2),$  a contradiction.
	
	If $v'_1$ is in $V_1$ and $v'_2$ is in $V_2$, 
	then $G$ has a ($\mathcal{F}_2,\mathcal{F}$)-partition 
	$(V_1\cup\{v_2\},V_2\cup\{v_1\}),$ a contradiction.
	
	If $v'_1$ and $v'_2$ are in $V_1$, 
	then $G$ has a ($\mathcal{F}_2,\mathcal{F}$)-partition 
	$(V_1\cup\{v_1\},V_2\cup\{v_2\})$ 
	since $v'_1$ is not $V_1$-saturated, a contradiction. 
	
	(iii)  Suppose to the contrary that 
	$v$ is incident to two poor $3$-faces
	with boundary walks $vv_1v_2$ and $vv_3v_4.$  
	From the definition,  $v_i$ is a terrible $3$-vertices 
	with pendent $4^-$-neighbors, say $v'_i$ 
	where $i\in \{1,2,3,4\}$.   
	
	Consider $G'=G-\{v_1,v_2,v_3,v_4\}$ with 
	a ($\mathcal{F}_2,\mathcal{F}$)-partition $(V_1,V_2).$  
	Since each $v'_i$ is a $3^-$-vertex	in $G'$, 
	we may assume that $v'_i\in V_1$ but 
	$v'_i$ is not $V_1$-saturated or $v'_i\in V_2$  
	where $i\in \{1,2,3,4\}.$ 
	Moreover, since $v$ is a $2$-vertex in $G',$ 
	we may assume that $v\in V_1$ 
	but has no $V_1$-neighbors or $v\in V_2.$ 
	The table below shows that $G$ has 
	an ($\mathcal{F}_2,\mathcal{F}$)-partition for all cases, 
	a contradiction. 

\begin{center}
	\begin{tabular}{ |c|c|c|c| } 
		\hline
		$v$ & $v_1$ and $v_2$ &  $v_3$ and $v_4$  
		& $(\mathcal{F}_{2},  \mathcal{F})$-partition \\ 
	\hline
	$v\in V_1$ & $v'_1 \in V_1$ or $v'_2 \in V_1$ 
	&$v'_3 \in V_1$ or $v'_4 \in V_1$
	&$(V_1,V_2\cup\{v_1,v_2,v_3,v_4\})$\\ 
	\hline
	$v\in V_1$ & $v'_1 \in V_1$ or $v'_2 \in V_1$ 
	&$v'_3 \in V_2$ and $v'_4 \in V_2$
	&$(V_1\cup\{v_4\},V_2\cup\{v_1,v_2,v_3\})$\\ 
	\hline
	$v\in V_1$ & $v'_1 \in V_2$ and $v'_2 \in V_2$ 
	&$v'_3 \in V_2$ and $v'_4 \in V_2$
	&$(V_1\cup\{v_2,v_4\},V_2\cup\{v_1,v_3\})$\\
	\hline

	$v\in V_2$ & $v'_1 \in V_1$ or $v'_2 \in V_1$ 
	&$v'_3 \in V_1$ or $v'_4 \in V_1$
	&$(V_1\cup\{v_1,v_2,v_3,v_4\},V_2)$\\ 
	\hline
	$v\in V_2$ & $v'_1 \in V_1$ or $v'_2 \in V_1$ 
	&$v'_3 \in V_2$ and $v'_4 \in V_2$
	&$(V_1\cup\{v_1,v_2,v_3\},V_2\cup\{v_4\})$\\ 
	\hline
	$v\in V_2$ & $v'_1 \in V_2$ and $v'_2 \in V_2$ 
	&$v'_3 \in V_2$ and $v'_4 \in V_2$
	&$(V_1\cup\{v_1,v_2,v_3\},V_2\cup\{v_2,v_4\})$\\ 
	\hline
	\end{tabular}
\end{center}

\end{proof}

\begin{lemma}\label{number}
Each $k$-vertex $v$ has upper bounds on $m_i(v)$ and $n_i(v)$ 
as follows. 

	\begin{enumerate}
		\item [\rm (i)] $n_3(v)\leq\lfloor\frac{k}{2}\rfloor$. 
		
		\item [\rm (ii)]  
		$
		n_5(v) \leq
		\begin{cases}
		0,        & \text{if } m_3(v)+2n_3(v)= k,\\
		k,     & \text{if } m_3(v)+2n_3(v)=0,\\
		k-m_3(v)-2n_3(v)-1        & \text{,otherwise}.\\
		\end{cases}
		$

		\item [\rm (iii)]  $m_3(v)\leq k-2 n_3(v)$.
		
		\item [\rm (iv)] $m_5(v)\leq k-m_3(v)-2m_3(v)$.
	\end{enumerate}	
\end{lemma}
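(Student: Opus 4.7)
The plan is to work locally at $v$ using the cyclic arrangement of its $k$ incident edges and $k$ face-corners, exploiting Observation \ref{face}(ii), which forbids a $3$-face from being adjacent to a $6^-$-face. First I would note that each incident $3$-face of $v$ occupies one corner and uses the two edges bounding that corner; two distinct incident $3$-faces cannot share an edge at $v$, since that would make them adjacent $3$-faces, contradicting Observation \ref{face}(ii). Thus $v$ has $2n_3(v)$ distinct ``$3$-face edges''. Statement (i) then falls out immediately: the $n_3(v)$ $3$-face corners are pairwise non-adjacent in the cyclic order of corners around $v$, so $n_3(v)\leq\lfloor k/2\rfloor$.

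For (iii) and (iv), I would examine any edge $vu$ where $u$ is a $3$-vertex pendent neighbor of $v$ with respect to some face $g$. Since $u$ has degree $3$, its unique face not containing the edge $uv$ must be $g$, so each $3$-vertex neighbor of $v$ contributes at most one pendent face. The key step is to show that $vu$ is never itself a $3$-face edge of $v$: if $u$ lay on an incident $3$-face $T=uvw$, then the face at $u$ opposite $uv$ would share the edge $uw$ with $T$, so $T$ would be adjacent to $g$, violating Observation \ref{face}(ii) whenever $g$ is a $3$-face or a $5$-face. This immediately gives (iii), since the $m_3(v)$ pendent-$3$-face edges are disjoint from the $2n_3(v)$ $3$-face edges. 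For (iv), pendent-$3$- and pendent-$5$-face $3$-vertex neighbors of $v$ are also disjoint from each other (each such neighbor contributes only one pendent face) and from the $3$-face endpoints, giving the natural bound $m_5(v)\leq k-m_3(v)-2n_3(v)$.

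For (ii), I would mark the $p:=m_3(v)+2n_3(v)$ edges identified above and ask which corners at $v$ can still be $5$-faces. A corner flanked by a $3$-face edge is either the $3$-face corner itself or a $7^+$-corner (Observation \ref{face}(ii) at $v$), and a corner flanked by a pendent-$3$-face edge is forced to be a $7^+$-corner (the same Observation applied at the $3$-vertex endpoint of that edge, exactly as in the previous paragraph). So a $5$-face corner needs both of its flanking edges to be unmarked. Decomposing the $k-p$ unmarked edges into maximal cyclic runs, a run of length $\ell$ contributes at most $\ell-1$ candidate $5$-corners when the run has two endpoints against marked edges, while when $p=0$ (no boundary at all) all $k$ corners are candidates. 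The three cases in the statement then emerge: $p=k$ yields $n_5(v)=0$; $p=0$ yields $n_5(v)\leq k$; and $0<p<k$ forces at least one run boundary, yielding $n_5(v)\leq k-m_3(v)-2n_3(v)-1$.

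I expect the main obstacle to be the cyclic bookkeeping in (ii) — specifically, justifying the ``$-1$'' drop from a run boundary and properly handling the wraparound case $p=0$ — but this is routine once the marking is set up. Everything else reduces to a uniform local application of Observation \ref{face}(ii) at $v$ and at a $3$-vertex pendent neighbor.
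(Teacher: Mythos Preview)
Your proposal is correct and follows essentially the same approach as the paper: both arguments work locally at $v$, classify the $k$ incident edges (equivalently, the paper's neighbor sets $A,B,C,D$) according to whether they serve an incident $3$-face, a pendent $3$-face, an incident $5$-face, or a pendent $5$-face, and then invoke Observation~\ref{face}(ii) at $v$ and at the $3$-vertex pendent neighbors to obtain the required disjointness and the consecutive-edge property for $5$-faces. Your run decomposition in (ii) is in fact a slightly cleaner version of the paper's step (c), which asserts $n_5(v)=|C|-1$ but really only needs (and only justifies) $n_5(v)\le |C|-1$; your counting of internal corners in each run makes the ``$-1$'' transparent.
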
	
\begin{proof} 
	Let $v_1, \ldots, v_k$ be neighbors of a $k$-vertex $v.$
	
	Let $A$ be the set of  $v_i$  where 
	$v_i$ is incident to an incident $3$-face of $v$. 
	
	Let $B$ be the set of  $v_i$ where 
	$v_i$ is pendent $3$-neighbor of $v$. 
	
	Let $C$ be the set of  $v_i$  where 
	$v_i$ is incident to an incident $5$-face of $v$. 
	
	Let $D$ be the set of  $v_i$  where 
	$v_i$ is a pendent $5$-neighbor of $v$.
	
	Recall Observation~\ref{face}(ii) that 
	a $3$-face is not adjacent to a $6^-$-face, 
	we have the following properties.
	
	(a) $A\cap B=\emptyset$ and  
	$(A\cup B)\cap (C\cup D)=\emptyset.$ 
	Consequently each $|C|$ and $|D|$  
	is not more than $k-|A|-|B|$. 
	
	(b) If $v_i\in A$, then $v_i$ is incident to exactly one incident $3$-face of $v$. 
	Consequently $n_3(v) \leq \frac{|A|}{2}.$ 
	
	(c) If $v_i\in C$, then  $v_{i-1}$ or $v_{i+1}$ is in $C$. 
	Consequently $n_5(v)= |C|-1$ for $1\leq |C| \leq k-1.$
	
	The Lemma \ref{number} follows (a), (b), and (c).
\end{proof}

\section{Proof of Theorem \ref{main}}

\indent Suppose $G$ is a minimal counterexample to the theorem. 
The discharging process is as follows. 
Let the initial charge of a vertex $v$ in $G$ be $\mu(v)=2d(v)-6,$ 
and let the initial charge of a face $f$ in $G$ be $\mu(f)=d(f)-6$. 
Using Euler's formula $|V(G)|-|E(G)|+|F(G)|=2$ 
and  the Handshaking lemma, we have
$$\displaystyle\sum_{v\in V(G)}\mu(v)
+\displaystyle\sum_{f\in F(G)}\mu(f)=-12.$$

\indent Now, we establish a new charge $\mu^*(x)$ 
for all $x\in V(G)\cup F(G)$ by transferring charge 
from one element to another and 
the summation of new charge $\mu^*(x)$ remains $-12.$  
If the final charge  $\mu^*(x)\geq 0$ for all $x\in V(G)\cup F(G)$, 
then we get a contradiction and the proof is completed.\\
\indent The discharging rules are as follows.\\
\noindent (R1) Let $v$ be a $3$-vertex. \\
- $v$ gives charge $\frac{1}{3}$ to each incident $3$-face. \\
\noindent (R2) Let $v$ be a $4$-vertex. \\
- $v$ gives charge $1$ to each incident $3$-face. \\
- $v$ gives charge $\frac{1}{2}$ to each incident $5$-face.\\
\noindent (R3) Let $v$ be a $5^+$-vertex. \\
- $v$ gives charge $\frac{5}{3}$ to each incident non poor $3$-face or $\frac{7}{3}$ to  each incident poor $3$-face. \\
- $v$ gives charge $\frac{2}{3}$ to each pendent $3$-face.\\
- $v$ gives charge $\frac{1}{4}$ to each pendent $5$-face.\\
- $v$ gives charge $\frac{1}{2}$ to each incident $5$-face.\\
\noindent (R4) Let $f$ be a $7^+$-face. \\
- $f$ gives charge $\frac{1}{6}$ to 
each incident $3$-vertex incident to a $3$-face.

\indent It remains to show that resulting $\mu^*(x)\geq 0$ 
for all $x\in V(G)\cup F(G)$. 
It is clear that $\mu^*(x)\geq 0$ 
when $x$ is a $3$-vertex not incident to any $3$-faces 
or $x$ is a $6$-face. 

\noindent\textit{\textbf{CASE 1:}} Let $f$ be a $3$-face.\\
\indent Note that $f$ is incident to a $5^+$-vertices 
if $f$ is incident to a terrible $3$-vertex 
by Lemma \ref{forbidvertex}(i). 
It follows that $f$ has at most two incident terrible $3$-vertices. 

Let $k$ denote the number of incident non-terrible 
$3$-vertices of $f.$ 

\indent  If $f$ has no incident terrible $3$-vertices,  
then $\mu^*(f)= \mu(f)+(3-k)\times1
+n\times\frac{1}{3}+k\times\frac{2}{3}=0$ 
by (R1), (R2), and (R3).

\indent  If $f$ has one incident terrible $3$-vertex,  
then $\mu^*(f)= \mu(f)+\frac{1}{3}+\frac{5}{3}+(1-k)\times1 
+k\times\frac{1}{3}+k\times\frac{2}{3}=0$ by (R1), (R2), and (R3). 

\indent  If $f$ has two incident terrible $3$-vertices,   
then $\mu^*(f)= \mu(f)+2\times\frac{1}{3}+\frac{7}{3}=0$ 
by (R1) and (R3).

\noindent\textit{\textbf{CASE 2:}} Let $f$ be a $5$-face.

\indent  If $f$ has no incident $4^+$-vertices,  
then $f$ has five non-terrible $3$-vertices 
by Lemma \ref{forbidvertex} (i). 
Thus $\mu^*(f)= \mu(f)+5\times\frac{1}{4}>0$ by (R3).

\indent  If $f$ has an incident $4^+$-vertex,  
then $f$ has at least  two non-terrible $3$-vertices 
by  Lemma \ref{forbidvertex} (i).
Thus $\mu^*(f)= \mu(f)+2\times\frac{1}{4}+\frac{1}{2}\geq 0$ by  (R2) and (R3).

\indent  If $f$ has at least two incident $4^+$-vertices, 
then $\mu^*(f)= \mu(f)+2\times\frac{1}{2}\geq 0$ by (R2) and (R3).

\noindent\textit{\textbf{CASE 3:}} Let $f$ be a $7^+$-face.

Note that a $7$-face is incident to at most six $3$-vertices 
incident to a $3$-face by Observation \ref{face}. 
By (R4), $\mu^*(f)= \mu(f)-6\times\frac{1}{6}\geq 0$ 
for a $7$-face $f$, 
and $\mu^*(f)= \mu(f)-d(v)\times\frac{1}{6}>0$ 
for a $8^+$-face $f.$ 

\noindent\textit{\textbf{CASE 4:}} 
Let $v$ be a $3$-vertex incident to a $3$-face. 

By Observation \ref{face},  
$v$ incident to one $3$-face and two $7^+$-faces. 
Thus $\mu^*(v)= \mu(v)-\frac{1}{3}+2\times\frac{1}{6}=0$ 
by  (R1) and (R4).

\noindent\textit{\textbf{CASE 5:}} Let $v$ be a $4$-vertex. 

By (R2),  $v$ loses charge $n_3(v)\times1+n_5(v)\times\frac{1}{2}$. 
By Lemmas \ref{number} (i) and (ii), we have the following cases. 

If $n_3(v)=0$, then $n_5(v)\leq4$. 
Thus $\mu^*(v)\geq \mu(v)-4\times\frac{1}{2}=0.$

If $n_3(v)=1$, then $n_5(v)\leq1$. 
Thus $\mu^*(v)\geq \mu(v)-1\times1-1\times\frac{1}{2}=0.$

If $n_3(v)=2$, then $n_5(v)=0$. 
Thus $\mu^*(v)= \mu(v)-2\times1=0.$ 

\noindent\textit{\textbf{CASE 6:}} Let $v$ be a $5^+$-vertex and $2\times n_3(v)+m_3(v)=0$. 

By Lemma \ref{number} (ii) and (iv), $n_5(v)\leq d(v)$ and $m_5(v)\leq d(v).$

Thus, 
$\mu^*(v) \geq \mu(v)-d(v)\times\frac{1}{2}-d(v)\times\frac{1}{4}
=2d(v)-6-d(v)\times \frac{3}{4}=\frac{5}{4}\times d(v)-6>0$ 
by (R3) and $d(v)\leq 5$.

\noindent\textit{\textbf{CASE 7:}} 
Let $v$ be a $5$-vertex with $2\times n_3(v)+m_3(v)>0$. 

By Lemma \ref{forbidvertex} (ii), 
$v$ is not incident  to a poor $3$-face. 
Then $v$ gives charge $\frac{5}{3}$ to each incident $3$-face. 
By (R3), we have\\
$$
\mu^*(v)=2d(v)-6-n_3(v)\times\frac{5}{3}+m_3(v)\times\frac{2}{3}+n_5(v)\times\frac{1}{2}+m_5(v)\times\frac{1}{4}.
$$

\textit{\textbf{CASE 7.1:}} 
Suppose $2\times n_3(v)+m_3(v)=d(v)=5$. 

By Lemma \ref{number}, we have $n_3(v)\leq2$ and  $m_3(v)= d(v)-2\times n_3(v),$ and $n_5(v)=m_5(v)=0.$
\begin{align*}
\mu^*(v)&=2d(v)-6-n_3(v)\times\frac{5}{3}-(d(v)-2\times n_3(v))\times\frac{2}{3}\\
&=\frac{4}{3}\times d(v)-6-n_3(v)\times\frac{1}{3}\\
&\geq\frac{4}{3}\times 5-6-2\times\frac{1}{3}\\
&= 0.
\end{align*}

\textit{\textbf{CASE 7.2:}} 
Suppose $0<2\times n_3(v)+m_3(v)<d(v)=5$.

By Lemma \ref{number}, we have $n_3(v)\leq2$,  
$n_5(v)\leq d(v)-2n_3(v)-m_3(v)-1$, 
and $m_5(v)\leq d(v)-2\times n_3(v)-m_3(v).$

\begin{align*}
\mu^*(v)&\geq 2d(v)-6-n_3(v)\times\frac{5}{3}-m_3(v)\times\frac{2}{3}-(d(v)-2n_3(v)-m_3(v)-1)\times\frac{1}{2}\\
& -(d(v)-2n_3(v)-m_3(v))\times\frac{1}{4}\\
&=\frac{5}{4}\times d(v)-\frac{11}{2}-n_3(v)\times\frac{1}{6}+m_3(v)
\times\frac{1}{12}\\
&\geq \frac{5}{4}\times d(v)-\frac{11}{2}-n_3(v)\times\frac{1}{6}\\
&\geq \frac{5}{4}\times 5-\frac{11}{2}-2\times\frac{1}{6}\\
&> 0.
\end{align*}

\noindent\textit{\textbf{CASE 8:}} Let $v$ be a $6$-vertex 
with $2\times n_3(v)+m_3(v)>0$. 

By Lemma \ref{forbidvertex} (iii), 
$v$ is incident to at most one poor $3$-face. 
Note that it is enough to consider only 
the case $v$ containing exactly one poor $3$-face. 

By (R3),  we have   
\begin{align*}
\mu^*(v)&=2d(v)-6-\frac{7}{3}-(n_3(v)-1)\times\frac{5}{3}
-m_3(v)\times\frac{2}{3}-n_5(v)\times\frac{1}{2}
-m_5(v)\times\frac{1}{4}\\
&=-\frac{2}{3}+(2d(v)-6-n_3(v)\times\frac{5}{3}
-m_3(v)\times\frac{2}{3}-n_5(v)\times\frac{1}{2}
-m_5(v)\times\frac{1}{4}).
\end{align*}

\textit{\textbf{CASE 8.1:}} Suppose $2\times n_3(v)+m_3(v)=d(v)=6$. 

By Lemma \ref{number} (iii) and (iv), $n_5(v)=m_5(v)=0.$ 

It follows from \textit{\textbf{CASE 7.1}} that
\begin{align*}
\mu^*(v)&=-\frac{2}{3}+(\frac{4}{3}\times d(v)-6-n_3(v)\times\frac{1}{3})\\
&=\frac{4}{3}\times d(v)-\frac{20}{3}-n_3(v)\times\frac{1}{3}\\
&\geq\frac{4}{3}\times 6-\frac{20}{3}-3\times\frac{1}{3}\\
&>0.
\end{align*}

\textit{\textbf{CASE 8.2:}} Let $0<2\times n_3(v)+m_3(v)<d(v)=6$.

It follows from \textit{\textbf{CASE 7.2}} that
\begin{align*}
\mu^*(v)&\geq-\frac{2}{3}+(\frac{5}{4}\times d(v)
-\frac{11}{2}-n_3(v)\times\frac{1}{6})\\
&=\frac{5}{4}\times d(v)-\frac{37}{6}-n_3(v)\times\frac{1}{6}\\
&\geq\frac{5}{4}\times 6-\frac{37}{6}-3\times\frac{1}{6}\\
&>0.
\end{align*}

\noindent\textit{\textbf{CASE 9:}} Let $v$ be a $7^+$-vertex 
with $2\times n_3(v)+m_3(v)>0$. 

Then $v$ gives charge at most $\frac{7}{3}$ to each incident $3$-face. 

By R(3),  we have 
\begin{align*}
\mu^*(v)&=2d(v)-6-n_3(v)\times\frac{7}{3}-m_3(v)\times\frac{2}{3}
-n_5(v)\times\frac{1}{2}-m_5(v)\times\frac{1}{4}\\
&=-n_3(v)\times\frac{2}{3}+(2d(v)-6-n_3(v)\times\frac{5}{3}
-m_3(v)\times\frac{2}{3}-n_5(v)\times\frac{1}{2}
-m_5(v)\times\frac{1}{4}).
\end{align*}

\textit{\textbf{CASE 9.1:}} Suppose $2\times n_3(v)+m_3(v)=d(v)$.

By Lemma \ref{number} (iii) and (iv), we have $n_5(v)=m_5(v)=0.$ 

It follows from \textit{\textbf{CASE 7.1}} that
\begin{align*}
\mu^*(v)&=-n_3(v)\times\frac{2}{3}+(\frac{4}{3}\times d(v)-6-n_3(v)\times\frac{1}{3})\\
&=\frac{4}{3}\times d(v)-6-n_3(v)\times1.
\end{align*}

If $v$ is a $7$-vertex, then $d(v)=7$ and $n_3(v)\leq3$. 
Thus $\mu^*(v)\geq\frac{4}{3}\times 7-6-3>0.$

If $v$ is a $8^+$-vertex, then $d(v)\geq 8$ 
and $n_3(v)\leq\lfloor\frac{d(v)}{2}\rfloor\leq \frac{d(v)}{2}$. 
Thus $\mu^*(v)\geq\frac{4}{3}\times d(v)-6-\frac{d(v)}{2}>0.$

\textit{\textbf{CASE 9.2:}} Suppose $0<2\times n_3(v)+m_3(v)<d(v)$.

By Lemma \ref{number}, we have $n_3(v)\leq\frac{d(v)}{2}$,  
$n_5(v)\leq d(v)-2n_3(v)-m_3(v)-1$, 
and $m_5(v)\leq d(v)-2\times n_3(v)-m_3(v).$

It follows from \textit{\textbf{CASE 7.2}} that
\begin{align*}
\mu^*(v)&\geq-n_3(v)\times\frac{2}{3}
+(\frac{5}{4}\times d(v)-\frac{11}{2}-n_3(v)\times\frac{1}{6})\\
&=\frac{5}{4}\times d(v)-\frac{11}{2}-n_3(v)\times\frac{5}{6}\\
&\geq \frac{5}{4}\times d(v)
-\frac{11}{2}-\frac{d(v)}{2}\times\frac{5}{6}\\
&=\frac{5}{6}\times d(v)-\frac{11}{2}\\
&>0 \indent\text{ for each }d(v)\geq 7.
\end{align*}

\indent Finally,  it  follows from all cases 
that $\sum_{x\in V(G)\cup F(G)}\mu^*(x)>0$, a contradiction. 
This completes the proof.



\end{document}